\numberwithin{equation}{section}
\numberwithin{figure}{section}
\theoremstyle{plain}
\newtheorem{thm}{\protect\theoremname}[section]
\theoremstyle{definition}
\newtheorem{defn}[thm]{\protect\definitionname}
\theoremstyle{definition}
\newtheorem{example}[thm]{\protect\examplename}
\theoremstyle{remark}
\newtheorem{rem}[thm]{\protect\remarkname}
\theoremstyle{plain}
\newtheorem{prop}[thm]{\protect\propositionname}
\theoremstyle{plain}
\newtheorem{cor}[thm]{\protect\corollaryname}
\theoremstyle{plain}
\newtheorem{lem}[thm]{\protect\lemmaname}
\theoremstyle{remark}
\newtheorem*{acknowledgement*}{\protect\acknowledgementname}
\providecommand{\corollaryname}{Corollary}
\providecommand{\definitionname}{Definition}
\providecommand{\examplename}{Example}
\providecommand{\lemmaname}{Lemma}
\providecommand{\propositionname}{Proposition}
\providecommand{\theoremname}{Theorem}
\providecommand{\remarkname}{Remark}
\global\long\def\N{\mathbb{\mathbb{N}}}%
\global\long\def\R{\mathbb{\mathbb{R}}}%
\global\long\def\Z{\mathbb{\mathbb{Z}}}%
\global\long\def\val{\mathbb{\mathrm{val}}}%
\global\long\def\Qp{\mathbb{Q}_{p}}%
\global\long\def\Zp{\mathbb{Z}_{p}}%
\global\long\def\ac{\mathrm{ac}}%
\global\long\def\C{\mathbb{C}}%
\global\long\def\Q{\mathbb{Q}}%
\global\long\def\supp{\mathrm{supp}}%
\global\long\def\VF{\mathrm{VF}}%
\global\long\def\RF{\mathrm{RF}}%
\global\long\def\VG{\mathrm{VG}}%
\global\long\def\spec{\mathrm{Spec}}%
\global\long\def\orb{\mathrm{orb}}%
\global\long\def\rss{\mathrm{rss}}%
\global\long\def\mexp{\mathbf{e}}%
\global\long\def\GL{\mathrm{GL}}%
\global\long\def\U{\mathrm{U}}%
\global\long\def\SU{\mathrm{SU}}%
\global\long\def\Rep{\mathrm{Rep}}%
\global\long\def\Wg{\mathrm{Wg}}%
\global\long\def\Ldp{\mathcal{L}_{\mathrm{DP}}}%
\global\long\def\sgn{\mathrm{sgn}}%
\global\long\def\Irr{\mathrm{Irr}}%
\global\long\def\Ind{\mathrm{Ind}}%
\global\long\def\g{\mathfrak{g}}%
\global\long\def\l{\mathfrak{l}}%
\global\long\def\p{\mathfrak{p}}%
\global\long\def\n{\mathfrak{n}}%
\global\long\def\m{\mathfrak{m}}%
\global\long\def\t{\mathfrak{t}}%
\global\long\def\q{\mathfrak{q}}%
\global\long\def\gl{\mathfrak{gl}}%
\global\long\def\sl{\mathfrak{sl}}%
\global\long\def\ggm{\underline{\mathfrak{g}}}%
\global\long\def\llm{\underline{\mathfrak{l}}}%
\global\long\def\ppm{\underline{\mathfrak{p}}}%
\global\long\def\nnm{\underline{\mathfrak{n}}}%
\global\long\def\ttm{\underline{\mathfrak{t}}}%
\global\long\def\qqm{\underline{\mathfrak{q}}}%
\global\long\def\Lie{\mathrm{Lie}}%
\global\long\def\ad{\operatorname{ad}}%
\global\long\def\lct{\operatorname{lct}}%
\global\long\def\Hom{\operatorname{Hom}}%
\global\long\def\Ad{\operatorname{Ad}}%
\global\long\def\Int{\operatorname{Int}}%
\global\long\def\GG{\underline{G}}%
\global\long\def\PP{\underline{P}}%
\global\long\def\QQ{\underline{Q}}%
\global\long\def\LL{\underline{L}}%
\global\long\def\NN{\underline{N}}%
\global\long\def\sX{\mathsf{X}}%
\global\long\def\sY{\mathsf{Y}}%
\global\long\def\sV{\mathsf{V}}%
\global\long\def\tr{\operatorname{tr}}%
\providecommand{\acknowledgementname}{Acknowledgement}
\subjclass[2020]{Primary 14B05; Secondary 14G20, 11S40, 11S80.}
\providecommand{\acknowledgementname}{Acknowledgement}
\providecommand{\corollaryname}{Corollary}
\providecommand{\definitionname}{Definition}
\providecommand{\examplename}{Example}
\providecommand{\lemmaname}{Lemma}
\providecommand{\propositionname}{Proposition}
\providecommand{\remarkname}{Remark}
\providecommand{\theoremname}{Theorem}
\begin{document}
\title[Analytic log-canonical threshold over local fields of positive characteristic]{A lower bound on the analytic log-canonical threshold over local fields
of positive characteristic}
\author{Itay Glazer}
\address{Department of Mathematics, Technion - Israel Institute of Technology,
Haifa, Israel}
\email{glazer@technion.ac.il}
\urladdr{https://sites.google.com/view/itay-glazer}
\author{Yotam I. Hendel}
\address{Department of Mathematics, Ben-Gurion University of the Negev, Be'er
Sheva 84105, Israel}
\email{yhendel@bgu.ac.il}
\urladdr{https://sites.google.com/view/yotam-hendel}
\begin{abstract}
Given a local field $F$ of positive characteristic, an $F$-analytic
manifold $X$ and an analytic function $f:X\rightarrow F$, the $F$-analytic
log-canonical threshold $\mathrm{lct}_{F}(f;x_{0})$ is the supremum
over the values $s\geq0$ such that $\left|f\right|_{F}^{-s}$ is
integrable near $x_{0}\in X$. We show that $\mathrm{lct}_{F}(f;x_{0})>0$.
Moreover, if $f$ is a regular function on a smooth algebraic $F$-variety,
we obtain an effective lower bound $\mathrm{lct}_{F}(f;x_{0})>C$,
where $C>0$ is explicit and depends only on the complexity class
of $X$ and $f$.
\end{abstract}

\keywords{Analytic log-canonical threshold, non-Archimedean local fields, positive
characteristic, Weierstrass preparation theorem, small ball estimates.}

\maketitle
\pagenumbering{arabic}

\global\long\def\N{\mathbb{\mathbb{N}}}%
\global\long\def\R{\mathbb{\mathbb{R}}}%
\global\long\def\Z{\mathbb{\mathbb{Z}}}%
\global\long\def\val{\mathbb{\mathrm{val}}}%
\global\long\def\Qp{\mathbb{Q}_{p}}%
\global\long\def\Zp{\mathbb{Z}_{p}}%
\global\long\def\ac{\mathrm{ac}}%
\global\long\def\C{\mathbb{C}}%
\global\long\def\Q{\mathbb{Q}}%
\global\long\def\supp{\mathrm{supp}}%
\global\long\def\VF{\mathrm{VF}}%
\global\long\def\RF{\mathrm{RF}}%
\global\long\def\VG{\mathrm{VG}}%
\global\long\def\spec{\mathrm{Spec}}%
\global\long\def\orb{\mathrm{orb}}%
\global\long\def\rss{\mathrm{rss}}%
\global\long\def\mexp{\mathbf{e}}%
\global\long\def\GL{\mathrm{GL}}%
\global\long\def\U{\mathrm{U}}%
\global\long\def\SU{\mathrm{SU}}%
\global\long\def\Rep{\mathrm{Rep}}%
\global\long\def\Wg{\mathrm{Wg}}%
\global\long\def\Ldp{\mathcal{L}_{\mathrm{DP}}}%
\global\long\def\sgn{\mathrm{sgn}}%
\global\long\def\Irr{\mathrm{Irr}}%
\global\long\def\Ind{\mathrm{Ind}}%
\global\long\def\g{\mathfrak{g}}%
\global\long\def\l{\mathfrak{l}}%
\global\long\def\p{\mathfrak{p}}%
\global\long\def\n{\mathfrak{n}}%
\global\long\def\m{\mathfrak{m}}%
\global\long\def\t{\mathfrak{t}}%
\global\long\def\q{\mathfrak{q}}%
\global\long\def\gl{\mathfrak{gl}}%
\global\long\def\sl{\mathfrak{sl}}%
\global\long\def\ggm{\underline{\mathfrak{g}}}%
\global\long\def\llm{\underline{\mathfrak{l}}}%
\global\long\def\ppm{\underline{\mathfrak{p}}}%
\global\long\def\nnm{\underline{\mathfrak{n}}}%
\global\long\def\ttm{\underline{\mathfrak{t}}}%
\global\long\def\qqm{\underline{\mathfrak{q}}}%
\global\long\def\Lie{\mathrm{Lie}}%
\global\long\def\ad{\operatorname{ad}}%
\global\long\def\lct{\operatorname{lct}}%
\global\long\def\Hom{\operatorname{Hom}}%
\global\long\def\Ad{\operatorname{Ad}}%
\global\long\def\Int{\operatorname{Int}}%
\global\long\def\GG{\underline{G}}%
\global\long\def\PP{\underline{P}}%
\global\long\def\QQ{\underline{Q}}%
\global\long\def\LL{\underline{L}}%
\global\long\def\NN{\underline{N}}%
\global\long\def\sX{\mathsf{X}}%
\global\long\def\sY{\mathsf{Y}}%
\global\long\def\sV{\mathsf{V}}%
\global\long\def\tr{\operatorname{tr}}%
\raggedbottom

\section{Introduction}

Let $F$ be a non-Archimedean local field of arbitrary characteristic,
with ring of integers $\mathcal{O}_{F}$, residue cardinality $q_{F}$,
and absolute value $|\cdot|_{F}.$ Let $X$ be an $F$-analytic manifold
of dimension $n$. Let $(U_{\alpha}\subset X,\psi_{\alpha}:U_{\alpha}\to F^{n})_{\alpha\in\mathcal{I}}$
be an atlas, and fix a Haar measure $\mu_{F^{n}}$ on $F^{n}$ such
that the volume of $\mathcal{O}_{F}^{n}$ is $1$. Let $C^{\infty}(X)$
be the space of smooth (i.e.~locally constant) functions on $X$,
and let $C_{c}^{\infty}(X)$ be the subspace of smooth compactly supported
functions. Let $\mathcal{M}^{\infty}(X)$ be the space of smooth measures
on $X$, i.e.~measures such that each $(\psi_{\alpha})_{*}(\mu|_{U_{\alpha}})$
has a locally constant density with respect to the Haar measure on
$F^{n}$. Let $\mathcal{M}_{c}^{\infty}(X)$ be the subspace of compactly
supported smooth measures.

In this note we study the following invariant: 
\begin{defn}[see e.g.~\cite{Kol,Mus12}, for $F=\C$]
\label{def:analytic lct}Let $X$ be an $F$-analytic manifold, let
$x_{0}\in X$ and let $J$ be an ideal in the ring of $F$-analytic
functions on $X$. Suppose that the germ $J_{x_{0}}$ at $x_{0}$
is non-zero, and is generated by the germs of $F$-analytic functions
$f_{1},\ldots,f_{r}:V\to F$, defined on some open neighborhood $V$
of $x_{0}$. We define the $F$\textit{-analytic log-canonical threshold}
of $J$ at $x_{0}$ by 
\[
\lct_{F}(J;x_{0}):=\sup\left\{ s\ge0:\begin{array}{l}
\text{there exists an open neighborhood }U\subset V\text{ of }x_{0}\text{ such that}\\
\text{for every }\mu\in\mathcal{M}_{c}^{\infty}(U),{\displaystyle \int_{U}\min_{1\le i\le r}|f_{i}(x)|_{F}^{-s}\,d\mu(x)<\infty}
\end{array}\right\} .
\]
\end{defn}

This definition is independent of the choice of generators of $J_{x_{0}}$.
Indeed, let $g_{1},\ldots,g_{r'}$ be another collection of analytic
functions, defined on a possibly smaller neighborhood of $x_{0}$,
whose germs generate $J_{x_{0}}$. After shrinking to a sufficiently
small open compact neighborhood $B$ of $x_{0}$, we may write $f_{i}(x)=\sum_{j=1}^{r'}c_{ij}(x)g_{j}(x)$
for analytic functions $c_{ij}$ on $B$. Taking $M_{1}:=\underset{1\le i\le r,\,1\le j\le r',\,x\in B}{\max}\left|c_{ij}(x)\right|_{F}$,
for every $s>0$, we get: 
\[
\min\limits_{1\le i\le r}\left|f_{i}(x)\right|_{F}^{-s}=\min\limits_{1\le i\le r}\left|\sum_{j=1}^{r'}c_{ij}(x)g_{j}(x)\right|_{F}^{-s}\geq M_{1}^{-s}\min\limits_{1\le j\le r'}\left|g_{j}(x)\right|_{F}^{-s}.
\]
Similarly, we deduce that $\min\limits_{1\le j\le r'}\left|g_{j}(x)\right|_{F}^{-s}\geq M_{2}^{-s}\min\limits_{1\le i\le r}\left|f_{i}(x)\right|_{F}^{-s}$
for some $M_{2}>0$, so that $\lct_{F}(J;x_{0})$ is well defined.

We now state the main results of the paper. In the theorems below
we take $F$ to be a non-Archimedean local field. 

Our first main result is as follows. 
\begin{thm}
\label{thm:lower bound on lct}Let $X$ be an $F$-analytic manifold,
let $x_{0}\in X$ and let $f_{1},\ldots,f_{r}:X\to F$ be $F$-analytic
functions generating an ideal $J$ whose germ $J_{x_{0}}$ at $x_{0}$
is non-zero. Then there exists $\epsilon_{x_{0}}>0$ such that: 
\[
\lct_{F}(J;x_{0})>\epsilon_{x_{0}}.
\]
\end{thm}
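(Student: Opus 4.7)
The strategy is to reduce to a single nonzero analytic function and then exploit Weierstrass preparation to get an explicit lower bound. To reduce, observe the pointwise inequality
\[
\min_{1\leq i\leq r}|f_i(x)|_F^{-s}=\Bigl(\max_i|f_i(x)|_F\Bigr)^{-s}\leq |f_j(x)|_F^{-s}
\]
for any $j$. Since $J\ne 0$, pick $j$ with $f_j\not\equiv 0$; if $f_j(x_0)\ne 0$ then $|f_j|_F$ is bounded away from $0$ near $x_0$ and the integrability is trivial, so assume $f_j(x_0)=0$ and set $f:=f_j$. It suffices to show $\lct_F(f;x_0)>0$.

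Next, I would pick local $F$-analytic coordinates identifying $x_0$ with $0\in F^n$ and apply the Weierstrass trick---a substitution $x_i\mapsto x_i+x_n^{k_i}$ for $i<n$ with sufficiently rapidly growing exponents $k_i$---to make $f$ regular in $x_n$ of some order $d\geq 1$. This step is combinatorial on the support of $f$ and works in any characteristic. The Weierstrass preparation theorem for convergent power series over the complete non-Archimedean field $F$ then produces, on some compact polydisc $B=B'\times B_n$ around $0$, a factorization $f=u\cdot P$, where $|u|_F$ is bounded above and below on $B$ and
\[
P(x',x_n)=x_n^d+a_1(x')x_n^{d-1}+\cdots+a_d(x')
\]
is a Weierstrass polynomial with $a_i$ $F$-analytic on $B'$ and $a_i(0)=0$. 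Consequently, the finiteness of $\int_B|f|_F^{-s}\,d\mu$ reduces to that of $\int_B|P|_F^{-s}\,d\mu$.

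The final step is the uniform estimate
\[
\sup_{x'\in B'}\int_{B_n}|P(x',x_n)|_F^{-s}\,dx_n<\infty\qquad\text{for all }s<1/d.
\]
For fixed $x'\in B'$, factor $P(x',x_n)=\prod_{j=1}^d(x_n-\alpha_j(x'))$ over $\bar F$. Each root satisfies the classical bound $|\alpha_j(x')|\leq\max_i|a_i(x')|_F^{1/i}$, which is uniformly bounded on $B'$ by continuity of the $a_i$. Hölder's inequality with exponents $(d,\ldots,d)$ then yields
\[
\int_{B_n}\prod_{j=1}^d|x_n-\alpha_j(x')|_F^{-s}\,dx_n\leq\prod_{j=1}^d\Bigl(\int_{B_n}|x_n-\alpha_j(x')|_F^{-sd}\,dx_n\Bigr)^{1/d},
\]
and each factor is bounded uniformly for $\alpha_j(x')$ ranging over a bounded subset of $\bar F$ and $sd<1$ by a routine ultrametric computation. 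Fubini then gives $\lct_F(f;x_0)\geq 1/d>0$.

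The principal obstacle is justifying Weierstrass preparation (and the preceding coordinate change) for $F$-analytic functions over a local field of positive characteristic. Both are classical for convergent power series over complete non-Archimedean fields---their proofs use only the ultrametric and fixed-point arguments, independent of the characteristic---but one must confirm that the paper's notion of $F$-analytic function matches (or embeds into) this standard framework. Once that is settled, the remaining estimates on the inner integral, including the uniform control when some $\alpha_j(x')$ lies in $\bar F\setminus F$, are routine.
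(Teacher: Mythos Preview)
Your plan is correct and tracks the paper's proof closely: reduce to a single nonzero $f$, pass to local coordinates, apply a shear $x_i\mapsto x_i+x_n^{k_i}$ to make $f$ distinguished in $x_n$, and invoke Weierstrass preparation for the Tate algebra (valid over any complete non-Archimedean field; the paper cites \cite{BGR84} for exactly this). The only genuine difference is in the final estimate on $\int_{B_n}|P(x',x_n)|_F^{-s}\,dx_n$. The paper proves a Remez-type sublevel bound $\mu_F\bigl(\{x:|P(x)|_F\le\delta\}\bigr)\le d\,\delta^{1/d}$ directly, by observing that if a product of $d$ linear factors has absolute value $\le\delta$ then some factor has absolute value $\le\delta^{1/d}$, and then applies the layer-cake formula. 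You instead separate the linear factors via H\"older with exponents $(d,\dots,d)$ and bound each $\int_{B_n}|x_n-\alpha_j(x')|_F^{-sd}\,dx_n$ uniformly. Both routes rest on the same ultrametric fact---that a ball $B(\alpha,\rho)\subset F'$ intersected with $F$ is either empty or a ball of radius $\rho$ in $F$ (this is the content of your ``routine ultrametric computation'' when $\alpha_j\in\bar F\setminus F$)---and both yield $\lct_F(f;x_0)\ge 1/d$. The paper's sublevel argument is marginally more elementary in that it avoids H\"older, while your approach makes the dependence on each root more explicit; there is no substantive gap.
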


The lower bound in Theorem \ref{thm:lower bound on lct} can be easily
made uniform in $x_{0}$ when $X$ is compact. If $X$ is non-compact,
such a uniform lower bound is false as one can glue together an infinite
sequence of disjoint open compact subsets, along with analytic functions
on each subset which become increasingly more singular.

In our second main result, we show that in the algebro-geometric setting,
the compactness assumption can be omitted, and a uniform lower bound
on $\lct_{F}(J;x_{0})$ can be given. 
\begin{thm}
\label{thm:uniform lower bound}Let $X$ be a smooth $F$-variety
and let $\varphi_{1},\ldots,\varphi_{r}:X\rightarrow\mathbb{A}_{F}^{1}$
be regular functions on $X$. Suppose that for every irreducible component
$X'$ of $X$, the restrictions $\varphi_{1}|_{X'},\ldots,\varphi_{r}|_{X'}$
are not all identically zero, and write $J:=\langle\varphi_{1,F},\ldots,\varphi_{r,F}\rangle$,
where $\varphi_{i,F}:X(F)\rightarrow F$ is the analytic map induced
from $\varphi_{i}$. Then there exists $\epsilon>0$, such that for
every $x_{0}\in X(F)$, 
\[
\lct_{F}(J;x_{0})>\epsilon.
\]
\end{thm}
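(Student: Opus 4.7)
The plan is to compactify $X$ projectively and combine the pointwise Theorem \ref{thm:lower bound on lct} with a compactness argument on the compactification. The main obstacle will be handling points lying on the boundary at infinity.

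First I reduce to the case that $X$ is affine by covering it with finitely many affine open subvarieties (a uniform bound on each piece yields one on the union). I then fix a closed embedding $X \hookrightarrow \mathbb{A}^N_F$ and let $\bar X \subset \mathbb{P}^N_F$ be its projective closure, so that $\bar X(F)$ is compact in the $F$-analytic topology. The function $x \mapsto \lct_F(J;x)$ on $X(F)$ is lower semi-continuous: if $\lct_F(J;x_0) > s$ with witnessing neighborhood $U$ as in Definition \ref{def:analytic lct}, the same $U$ also witnesses $\lct_F(J;x) > s$ for every $x \in U$. Moreover, any $x \in X(F)$ lying outside the common zero locus of the $\varphi_{i,F}$'s satisfies $\lct_F(J;x) = \infty$ trivially. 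By compactness of $\bar X(F)$, it suffices to produce for each $y_0 \in \bar X(F)$ an open neighborhood $V_{y_0}$ of $y_0$ in $\bar X(F)$ and $\epsilon_{y_0} > 0$ with $\lct_F(J;x) > \epsilon_{y_0}$ for every $x \in V_{y_0} \cap X(F)$; a finite subcover and the minimum of the $\epsilon_{y_0}$'s then finishes the proof. For $y_0 \in X(F)$ this is immediate from Theorem \ref{thm:lower bound on lct} together with lower semi-continuity.

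The main obstacle is the boundary case $y_0 \in \bar X(F) \setminus X(F)$, where $\bar X$ may be singular at $y_0$. I would pass to an affine chart of $\mathbb{P}^N_F$ containing $y_0$: in this chart, $\bar X$ appears as a closed subvariety of $F^N$, $X$ corresponds to $\bar X \setminus V(H)$ for a polynomial $H$ cutting out the hyperplane at infinity, and each $\varphi_i$ takes the form $g_i/H^d$ for polynomials $g_i \in F[x_1,\ldots,x_N]$ and a common $d \geq 0$. Shrinking $V_{y_0}$ so that $|H|_F \leq 1$ on it yields the pointwise bound $\min_i |\varphi_{i,F}|_F^{-s} \leq \min_i |g_i|_F^{-s}$, reducing the problem to estimating $\int \min_i |g_i|^{-s}\,d\mu$ for compactly supported $\mu$ on $V_{y_0} \cap X(F)$. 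If some $g_i(y_0) \neq 0$, the integrand is locally bounded near $y_0$ and one concludes immediately. Otherwise all $g_i$ vanish at $y_0$, and I would apply Theorem \ref{thm:lower bound on lct} to the polynomials $g_1,\ldots,g_r$ on the smooth ambient manifold $F^N$ at $y_0$ to obtain integrability there. The hardest part of the proof will be to transfer this integrability from the ambient $F^N$ (with its Haar measure) to the smooth submanifold $X(F) \cap V_{y_0}$ (with its intrinsic analytic measure), whose dimension may be strictly less than $N$; this requires a careful local comparison of the two measures with uniformly controlled Jacobians as $x \to y_0$. An alternative route that may sidestep this obstacle is to first establish the theorem on a smooth generically-finite alteration $\pi: \widetilde X \to \bar X$ (which exists in any characteristic by de Jong's theorem), exploiting compactness and smoothness of $\widetilde X(F)$, and then push the uniform bound forward to $X(F)$ via the finiteness of $\pi$ on a dense open.
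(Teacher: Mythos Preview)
Your compactification strategy is genuinely different from the paper's, and the step you flag as ``the hardest part'' is in fact a real gap that your outline does not close. Integrability of $\min_i|g_i|^{-s}$ with respect to Haar measure on an ambient ball in $F^N$ does \emph{not} imply integrability with respect to the intrinsic measure on the lower-dimensional submanifold $X(F)$: restriction to a submanifold can raise the order of vanishing arbitrarily (e.g.\ $g=x_1$ on $F^2$ has $\lct_F(g;0)=1$, but on the smooth curve $x_1=x_2^{N}$ it becomes $x_2^{N}$ with $\lct=1/N$). Near a boundary point $y_0$ where $\bar X$ may be singular, you have no uniform control on how $X(F)$ sits inside $F^N$ as $x\to y_0$, so the ``careful local comparison of measures with uniformly controlled Jacobians'' you allude to need not exist. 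The de Jong alternative runs into its own obstacles: over a fixed non-algebraically-closed $F$ the alteration need not be surjective on $F$-points, and in positive characteristic it need not be generically \'etale, so pushing a uniform bound from $\widetilde X(F)$ down to $X(F)$ is not automatic.

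The paper sidesteps all of this by never compactifying. It proves the stronger Theorem~\ref{thm:concrete lower bound}, which gives an explicit bound depending only on the complexity data $(n,m,d,D)$: on each affine chart where $X\subset\mathbb{A}^{n+m}$ is cut out by $m$ equations of degree $\le D$, B\'ezout's theorem bounds the Hilbert--Samuel multiplicity of $\{\varphi_i=0\}\cap X$ at any smooth point $x_0$ by $d\cdot D^{m}$ (Lemma~\ref{lem:bounds on multiplicity}), hence $\mathrm{ord}_{X,x_0}(\varphi_i)\le d\cdot D^{m}$ uniformly in $x_0$. An effective Weierstrass preparation then converts a multiplicity bound $K$ into $\lct_F\ge 1/K$ (Lemma~\ref{lem:lct of low multiplicity}). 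Since $X$ is covered by finitely many such affine charts, one takes the minimum of the resulting bounds. The point is that the uniformity in $x_0$ comes for free from the degree bound, with no compactness argument and no analysis at infinity.
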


In fact, concrete lower bounds which depend only on the complexity\footnote{For the precise definition of complexity, see e.g.~\cite[Definition 7.7]{GH19}}
of $X$ and the degrees of $\varphi_{1},\ldots,\varphi_{r}$, can
be given. 
\begin{thm}
\label{thm:concrete lower bound}Let $X\subseteq\mathbb{A}_{F}^{n+m}$
be a smooth $n$-dimensional $F$-subvariety defined by $g_{1}=\ldots=g_{m}=0$
for polynomials $g_{1},\ldots,g_{m}\in F[x_{1},\ldots,x_{n+m}]$ of
degree $\leq D$. Let $\varphi_{1},\ldots,\varphi_{r}\in F[x_{1},\ldots,x_{n+m}]$
be polynomials of degree $\leq d$, such that for every irreducible
component $X'$ of $X$, the restrictions $\varphi_{1}|_{X'},\ldots,\varphi_{r}|_{X'}$
are not all identically zero, and let $J:=\langle\varphi_{1,F},\ldots,\varphi_{r,F}\rangle$.
Then for every $x_{0}\in X(F)$: 
\[
\lct_{F}(J;x_{0})\geq\frac{1}{d\cdot D^{m}}.
\]
\end{thm}

The lower bound in Theorem \ref{thm:concrete lower bound} is optimal,
as the following example shows. 
\begin{example}
Let $d,D\geq1$. Let $X\subseteq\mathbb{A}_{F}^{1+m}$ be the smooth
curve defined by $x_{i+1}=x_{i}^{D}$ for $i=1,\ldots,m$. Let $\varphi(x_{1},\ldots,x_{m+1})=x_{m+1}^{d}$.
The projection $(x_{1},\ldots,x_{m+1})\mapsto x_{1}$ is a diffeomorphism
$X(\mathcal{O}_{F})\rightarrow\mathcal{O}_{F}$ with inverse $\psi(y)=(y,y^{D},y^{D^{2}},\ldots,y^{D^{m}})$.
Thus, 
\[
\lct_{F}(\varphi;0)=\lct_{F}(\varphi\circ\psi;0)=\lct_{F}(y^{D^{m}d};0)=\frac{1}{d\cdot D^{m}}.
\]
\end{example}

\begin{rem}
In characteristic zero, analogues of Theorems \ref{thm:lower bound on lct}
and \ref{thm:uniform lower bound} are known for arbitrary local fields,
i.e.~for $F=\R,\C$, or for a finite extension of $\Qp$, as consequences
of Hironaka's theorem on resolution of singularities \cite{Hir64}.
See, for example, \cite[Remark 1 and Corollary 2]{LCVZ13} for the
Archimedean case of analytic mappings, \cite{Kol}, \cite[Theorem 1.2]{Mus12}
and \cite{Saia} for the cases $F=\mathbb{R},\mathbb{C}$, and \cite[Theorem 2.7]{VZG08}
for the case where $F$ is a finite extension of $\Qp$. Thus, the
main point of the present paper is the fixed positive-characteristic
case. The proof of Theorem \ref{thm:concrete lower bound} should
extend to the Archimedean setting. We work throughout in the non-Archimedean
setting in order to keep the notation and the arguments uniform.
\end{rem}

The results of this paper are utilized in a series of works of Aizenbud--Gourevitch--Kazhdan--Sayag
\cite{AGKSb,AGKSc} about Harish-Chandra's regularity theorem over
local fields of positive characteristic. In a sequel to this paper,
appearing in \cite[Appendix A]{AGKSc}, we use the lower bounds above
on the $\lct_{F}$ to study integrability of pushforwards $f_{*}\mu$
of smooth, compactly supported measures $\mu$, by analytic maps $f:X\rightarrow Y$
between $F$-analytic manifolds, in positive characteristic. This
generalizes results from \cite{GH21,GHS} in characteristic $0$,
which were used in \cite{GGH} to study integrability of Harish-Chandra
characters of reductive groups over local fields.

\subsection{Discussion of the $\protect\lct_{F}$ and the strategy of proof}

When $F=\C$, the $F$-analytic log-canonical threshold is equivalent
to the \emph{log-canonical threshold} $\lct$ (also called the \emph{complex
singularity exponent} \cite[Chapter 7]{AGV88} and \cite{Var83})
which is an important singularity invariant, widely used in birational
geometry (see e.g.~\cite{Mus12,Kol}). For local fields $F$ of characteristic
$0$, the $F$-analytic log-canonical threshold was extensively studied,
especially in the context of Igusa's local zeta function \cite{Igu78,Den87,Den91a,Igu00,Saia,VZG08,Bud12,Meu16}.
In this setting, one can use a rich set of tools, such as Hironaka's
(analytic) resolution of singularities \cite{Hir64}, the theory of
motivic integration and uniform $p$-adic integration (e.g.~\cite{HK06,CL08,CL10}),
and the theory of $D$-modules \cite{Ber72}.

Some of the (geometric and model-theoretic) tools extend to local
fields of large positive characteristic. However, if $F$ is a fixed
local field of (small) positive characteristic, the above methods
break; the model theory of such local fields is not well understood\footnote{See e.g.~\cite[Remark 2]{Clu01} and \cite{CH18} for a short discussion
of some of the challenges.}, and in the algebro-geometric realm, the absence of Hironaka's theorem
in positive characteristic, along with issues relating to inseparability,
motivates geometers to consider alternative notions, such as the \emph{$\mathsf{F}$-pure
threshold }(see e.g.~\cite{TW04,BMS09} and \cite[Section 2]{Mus12}). 
\begin{rem}
In the polynomial (algebro-geometric) setting, the $\mathsf{F}$-pure
threshold and the $F$-analytic log-canonical threshold share many
similar properties (see e.g.~\cite[Section 2.2]{Mus12}), and in
particular they agree for monomials, and for other families of polynomials.
On the other hand, they are different invariants; for example, the
$\mathsf{F}$-pure threshold of $f(x,y)=x^{2}+y^{3}$ at $0$ is $\frac{5}{6}-\frac{1}{6p}$
whenever the characteristic is $p=2\,(\mathrm{mod}\,3)$ and $p>3$,
while $\lct_{F}(f)=\frac{5}{6}$ for every local field $F$ of large
enough residual characteristic. Moreover, the $\mathsf{F}$-pure threshold
is stable under finite field extensions \cite[Proposition 2.4]{BHMM12},
unlike the $\lct_{F}$ which might change. Indeed, one can consider
$f(x,y,z)=x^{2}+y^{2}+z^{2}$ for $F=\R$, and $f(x,y,z)=x^{2}+y^{2}+tz^{2}$
for $F=\mathbb{F}_{p}((t))$, where $p=3\,(\mathrm{mod}\,4)$ is a
prime. In both cases, one has $\lct_{F}(f)=\frac{3}{2}$, but $\lct_{F'}(f)=1$
for some quadratic extension $F'=\C$ or $F'=\mathbb{F}_{p}((\sqrt{t}))$.
This can be shown by changing variables using a blow-up along the
origin, which is a resolution of singularities in these cases: over
$F$, the corresponding quadratic form $f$ is anisotropic, so the
strict transform has no $F$-points above the origin and the exceptional
divisor gives $\lct_{F}(f)=\frac{3}{2}$; over $F'$, the form becomes
isotropic, so the strict transform has $F'$-points above the origin,
giving $\lct_{F'}(f)=1$ (see e.g.~\cite[Proposition 3.3]{Saia}
for $F=\R$). Finally, it would be interesting to compare these two
invariants in the polynomial setting, for a fixed local field of positive
characteristic.
\end{rem}

The strategy to prove Theorem \ref{thm:lower bound on lct} is as
follows. Firstly, we reduce to a single analytic function $f:X\to F$
(i.e.~$r=1$) and then reduce, using a Weierstrass preparation theorem
(Theorem \ref{thm:Weierstrass preparation}), to the case that $f$
is a \emph{Weierstrass polynomial}, i.e.~a monic polynomial in the
last variable $x_{n}$, whose coefficients are (strictly) convergent
power series in $(x_{1},\ldots,x_{n-1})$ (see Definition \ref{def:formal power series}
below). Secondly, we prove a Remez-type small ball estimate for monic
polynomials in one variable (see Lemma \ref{lem:Remez for monic}).
We then generalize this to a small ball estimate for a Weierstrass
polynomial (Lemma \ref{lem:small ball estimates for Weierstrass}).
Finally, we show that small ball estimates for $f:X\subseteq\mathcal{O}_{F}^{n}\rightarrow F$
imply a lower bound on $\lct_{F}(f;0)$, thus concluding the proof.

To prove Theorems \ref{thm:uniform lower bound} and \ref{thm:concrete lower bound},
we control the order of vanishing of each regular function $\varphi_{i}$
along $X$ at $x_{0}\in X(F)$ (see Lemma \ref{lem:bounds on multiplicity}),
and combine it with an effective version of the Weierstrass preparation
theorem to obtain effective bounds on $\lct_{F}(\varphi_{i};0)$ (Lemma
\ref{lem:lct of low multiplicity}).

\section{Proof of Theorem \ref{thm:lower bound on lct}}

Throughout this section $F$ is a non-Archimedean local field. We
start with the following definition. 
\begin{defn}[{\cite[Sections 5.1.1, 5.2.1-5.2.4]{BGR84}}]
~\label{def:formal power series} 
\begin{enumerate}
\item We denote by $F\left\llbracket x_{1},\ldots,x_{n}\right\rrbracket $
the ring of formal power series over $F$. 
\item The \textbf{Tate algebra} $T_{n}$ in $n$ variables is the ring of
\textbf{strictly convergent power series}: 
\[
T_{n}=F\langle x_{1},\ldots,x_{n}\rangle:=\left\{ \sum_{I\in\Z_{\geq0}^{n}}a_{I}x^{I}\in F\left\llbracket x_{1},\ldots,x_{n}\right\rrbracket :a_{I}\underset{\left|I\right|\rightarrow\infty}{\longrightarrow}0\right\} .
\]
\item The \textbf{Gauss norm} $\left\Vert \:\right\Vert _{{\tiny \mathrm{Gauss}}}$
on $T_{n}$ is defined as 
\[
\left\Vert \sum_{I\in\Z_{\geq0}^{n}}a_{I}x^{I}\right\Vert _{\mathrm{Gauss}}:=\max_{I\in\Z_{\geq0}^{n}}\left\{ \left|a_{I}\right|_{F}\right\} .
\]
\item An element $f=\sum_{s=0}^{\infty}a_{s}(x_{1},\ldots,x_{n-1})x_{n}^{s}\in T_{n}$
is said to be \textbf{distinguished in $x_{n}$ of order }$s_{0}\in\Z_{\geq0}$
if: 
\begin{enumerate}
\item The element $a_{s_{0}}$ is a unit in $T_{n-1}$. 
\item We have $\left\Vert f\right\Vert _{\mathrm{Gauss}}=\left\Vert a_{s_{0}}\right\Vert _{\mathrm{Gauss}}$
and $\left\Vert a_{s_{0}}\right\Vert _{\mathrm{Gauss}}>\left\Vert a_{s}\right\Vert _{\mathrm{Gauss}}$
for $s>s_{0}$. 
\end{enumerate}
\item A monic polynomial $f\in T_{n-1}[x_{n}]$ in the variable $x_{n}$,
with $\left\Vert f\right\Vert _{\mathrm{Gauss}}=1$ is called a \textbf{Weierstrass
polynomial}. 
\end{enumerate}
~ 
\end{defn}

\begin{prop}[{\cite[Section 5.2.4, Proposition 1]{BGR84}}]
\label{prop: unipotent automorphism of F^n}Let $0\neq f\in T_{n}$.
Then there exists an $F$-analytic diffeomorphism $\phi:\mathcal{O}_{F}^{n}\to\mathcal{O}_{F}^{n}$
of the form 
\begin{equation}
\phi(x_{1},\ldots,x_{n})=(x_{1}+x_{n}^{d_{1}},\ldots,x_{n-1}+x_{n}^{d_{n-1}},x_{n}),\label{eq:simple automorphism}
\end{equation}
for $d_{1},\ldots,d_{n-1}\in\N$, such that $f\circ\phi$ is distinguished
in $x_{n}$ (of some order $s_{0}$). 
\end{prop}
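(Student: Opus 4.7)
The plan is to choose the exponents $d_{1},\ldots,d_{n-1}$ growing rapidly enough that, after the substitution $\phi$, the finitely many top-norm monomials of $f$ appear at distinct, controlled powers of $x_{n}$; $s_{0}$ will then be the largest such power.

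I would first rescale by an element of $F^{\times}$ to arrange $\|f\|_{\mathrm{Gauss}}=1$ and write $f=\sum_{I\in\Z_{\ge 0}^{n}}a_{I}x^{I}$. Strict convergence forces the set $S:=\{I:|a_{I}|_{F}=1\}$ to be finite. For a tuple $(d_{1},\ldots,d_{n-1})\in\N^{n-1}$ introduce the weight $w(I):=i_{n}+\sum_{k=1}^{n-1}i_{k}d_{k}$. Taking $N:=1+\max_{I\in S,\,1\le k\le n}i_{k}$ and $d_{k}:=N^{k}$, the value $w(I)$ is the base-$N$ expansion with digits $(i_{n},i_{1},\ldots,i_{n-1})$, so $w|_{S}$ is injective. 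Let $I^{*}\in S$ be its unique maximizer and set $s_{0}:=w(I^{*})$. The corresponding $\phi$ maps $\mathcal{O}_{F}^{n}$ to itself because $|x_{n}^{d_{k}}|_{F}\le 1$, and inverts by sign-flipping, so it is an $F$-analytic diffeomorphism of $\mathcal{O}_{F}^{n}$.

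To verify that $f\circ\phi$ is distinguished in $x_{n}$ of order $s_{0}$, I would expand $f\circ\phi=\sum_{s\ge 0}b_{s}(x_{1},\ldots,x_{n-1})x_{n}^{s}$ and observe that the monomial $a_{I}x^{I}$, once substituted, only contributes terms whose $x_{n}$-degree is at most $w(I)$, with equality exactly when the $(x_{1},\ldots,x_{n-1})$-part is trivial. Two consequences follow. For $s>s_{0}$, every contributing $I$ satisfies $w(I)\ge s>s_{0}$ and hence lies outside $S$; each coefficient of $b_{s}$ is then a finite sum of elements of norm $<1$, giving $\|b_{s}\|_{\mathrm{Gauss}}<1$. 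For $s=s_{0}$, only $I^{*}\in S$ can contribute, and only to the constant monomial; hence the constant term of $b_{s_{0}}$ is $a_{I^{*}}$ plus norm-$<1$ terms, while all other coefficients of $b_{s_{0}}$ arise only from $I\notin S$. Thus $b_{s_{0}}=a_{I^{*}}(1+h)$ with $\|h\|_{\mathrm{Gauss}}<1$, making $b_{s_{0}}$ a unit of $T_{n-1}$ of Gauss norm $1=\|f\circ\phi\|_{\mathrm{Gauss}}$, as required.

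The only point that needs genuine care is the combinatorial bookkeeping in the last step: one must simultaneously guarantee a unique dominant $I^{*}\in S$ and preclude cancellations from restoring full Gauss norm in some $b_{s}$ with $s>s_{0}$. The rapid base-$N$ growth $d_{k}=N^{k}$ arranges both features at once, by turning $w|_{S}$ into a place-value encoding; the rest is formal.
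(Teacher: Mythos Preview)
The paper does not supply its own proof of this proposition; it is quoted directly from \cite[Section~5.2.4, Proposition~1]{BGR84}. Your argument is correct and is essentially the standard one found there: normalize to $\|f\|_{\mathrm{Gauss}}=1$, pick the exponents $d_k=N^{k}$ with $N$ exceeding every coordinate of every top-norm multi-index so that the weight $w(I)=i_n+\sum_k i_k d_k$ is injective on $S$, and then read off the distinguished conditions from the ultrametric inequality. One cosmetic remark: in your final paragraph, there is no danger of ``cancellations restoring full Gauss norm'' to guard against, since a (finite) non-Archimedean sum of terms each of norm $\le q_F^{-1}$ automatically has norm $\le q_F^{-1}$; the real content, which you handle correctly, is that for each fixed monomial in $b_s$ only finitely many $I$ contribute.
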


\begin{thm}[{Weierstrass preparation theorem, \cite[Section 5.2.2, Theorem 1]{BGR84}}]
\label{thm:Weierstrass preparation}Let $f(x_{1},\ldots,x_{n})\in T_{n}$
be distinguished of order $s_{0}$. Then there exist a unit $h(x_{1},\ldots,x_{n})\in T_{n}$
and elements $r_{0},r_{1},\ldots,r_{s_{0}-1}\in T_{n-1}$ such that
\begin{equation}
f(x_{1},\ldots,x_{n})\cdot h(x_{1},\ldots,x_{n})=x_{n}^{s_{0}}+\sum_{i=0}^{s_{0}-1}r_{i}(x_{1},\ldots,x_{n-1})x_{n}^{i}\in T_{n-1}[x_{n}]\label{eq:Weierstrass}
\end{equation}
is a Weierstrass polynomial. 
\end{thm}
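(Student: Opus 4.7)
The plan is to derive Theorem \ref{thm:Weierstrass preparation} from the \emph{Weierstrass division theorem} in $T_{n}$, and to establish the latter by lifting polynomial long division over the residue field. The division statement reads: for every $g\in T_{n}$ there exist unique $q\in T_{n}$ and $r\in T_{n-1}[x_{n}]$ with $\deg_{x_{n}}r<s_{0}$ such that $g=qf+r$. Granted division, apply it to $g=x_{n}^{s_{0}}$ to get $x_{n}^{s_{0}}=qf+r$ with $\deg_{x_{n}}r<s_{0}$; rearranging yields $qf=x_{n}^{s_{0}}-r$, which is monic in $x_{n}$ of degree $s_{0}$. Setting $h:=q$ and $-r=\sum_{i=0}^{s_{0}-1}r_{i}x_{n}^{i}$ then produces the identity \eqref{eq:Weierstrass}; the unit property of $h$ and the fact that $fh$ is a Weierstrass polynomial (i.e.\ $\|fh\|_{\mathrm{Gauss}}=1$) will be verified at the end.

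To prove Weierstrass division, after rescaling by a constant of $F^{\times}$ I may assume $\|f\|_{\mathrm{Gauss}}=1$, so $f\in\mathcal{O}_{F}\langle x_{1},\dots,x_{n}\rangle$. Let $k$ be the residue field and reduce modulo the maximal ideal $\m\subset\mathcal{O}_{F}$. Because the Gauss norm on $k$ is trivial, the restricted power series ring $k\langle x_{1},\dots,x_{n}\rangle$ coincides with $k[x_{1},\dots,x_{n}]$. The distinguished hypothesis kills the coefficients $\bar a_{s}$ for $s>s_{0}$ (as $\|a_{s}\|_{\mathrm{Gauss}}<1$) and forces $\bar a_{s_{0}}\in k^{\times}$ (since $a_{s_{0}}$ is a unit of $T_{n-1}$ of Gauss norm one), so $\bar f\in k[x_{1},\dots,x_{n}]$ is a polynomial in $x_{n}$ of degree exactly $s_{0}$ with invertible leading coefficient. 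Ordinary polynomial long division in $k[x_{1},\dots,x_{n-1}][x_{n}]$ then produces, for any $\bar g\in k[x_{1},\dots,x_{n}]$, a unique decomposition $\bar g=\bar q\bar f+\bar r$ with $\deg_{x_{n}}\bar r<s_{0}$. I would lift this to characteristic zero by successive approximation: given $g\in\mathcal{O}_{F}\langle x_{1},\dots,x_{n}\rangle$ with $\|g\|_{\mathrm{Gauss}}\le 1$, choose lifts $q_{0}\in\mathcal{O}_{F}\langle x_{1},\dots,x_{n}\rangle$ and $r_{0}\in\mathcal{O}_{F}\langle x_{1},\dots,x_{n-1}\rangle[x_{n}]_{<s_{0}}$ of $\bar q,\bar r$ of Gauss norm $\le 1$, and set $g_{1}:=g-q_{0}f-r_{0}$, which then lies in $\pi\cdot\mathcal{O}_{F}\langle x_{1},\dots,x_{n}\rangle$ for a uniformizer $\pi$. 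Iterating the same recipe on $\pi^{-1}g_{1}$ yields sequences $(q_{i}),(r_{i})$ of Gauss norm $\le 1$, and the series $q:=\sum_{i\ge 0}\pi^{i}q_{i}\in T_{n}$ and $r:=\sum_{i\ge 0}\pi^{i}r_{i}\in T_{n-1}[x_{n}]_{<s_{0}}$ converge absolutely and satisfy $g=qf+r$.

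The main technical point is precisely the invertibility of the leading $x_{n}$-coefficient of $\bar f$, which is where the distinguished hypothesis enters decisively: it enables clean polynomial long division over $k[x_{1},\dots,x_{n-1}]$ producing lifts of controlled Gauss norm at every step of the iteration, and thus guarantees the geometric contraction by $|\pi|_{F}<1$. Uniqueness of $(q,r)$ is obtained by reducing any difference of two solutions modulo $\m$, invoking uniqueness of polynomial division over $k[x_{1},\dots,x_{n-1}]$, and then bootstrapping to higher $\m$-adic orders. To conclude the proof of Theorem \ref{thm:Weierstrass preparation}, reduce the identity $hf=x_{n}^{s_{0}}-r$ modulo $\m$ and compare $x_{n}^{s_{0}}$-coefficients: this gives $\bar h\cdot\bar a_{s_{0}}=1$, whence $\bar h=\bar a_{s_{0}}^{-1}\in k^{\times}$ is a nonzero constant. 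Therefore $h$ has invertible constant term of Gauss norm one, hence is a unit in $T_{n}$, and simultaneously $\|hf\|_{\mathrm{Gauss}}=1$, so $hf$ is a Weierstrass polynomial as required.
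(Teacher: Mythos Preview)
The paper does not prove Theorem~\ref{thm:Weierstrass preparation} at all: it is quoted as a black box from \cite[Section 5.2.2, Theorem 1]{BGR84}. So there is no ``paper's own proof'' to compare against; you have supplied a proof where the paper simply cites one.

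That said, your argument is essentially the standard one (and is in fact the route taken in BGR): establish Weierstrass division in $T_{n}$ by reducing modulo the maximal ideal, using that the reduction $\mathcal{O}_{F}\langle x\rangle/\mathfrak{m}\mathcal{O}_{F}\langle x\rangle\cong k[x]$ turns the distinguished element into a genuine polynomial in $x_{n}$ with unit leading coefficient, performing ordinary Euclidean division there, and lifting by $\pi$-adic successive approximation; then specialize division to $g=x_{n}^{s_{0}}$ and read off the unit property of $h$ from the reduction. The argument is correct. Two small remarks: first, the phrase ``lift this to characteristic zero'' is a slip --- the whole point of the present paper is that $F$ may have positive characteristic, and nothing in your lifting argument uses characteristic zero; it is purely $\pi$-adic. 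Second, when you ``compare $x_{n}^{s_{0}}$-coefficients'' in $\bar h\bar f=x_{n}^{s_{0}}-\bar r$, you are implicitly using that $\deg_{x_{n}}\bar h=0$; this follows by comparing $x_{n}$-degrees on both sides, but it is worth saying explicitly before concluding $\bar h\cdot\bar a_{s_{0}}=1$.
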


\begin{cor}
\label{cor:reduction to Weierstrass polynomial}In order to prove
Theorem \ref{thm:lower bound on lct}, we may assume that $r=1$,
$X=\mathcal{O}_{F}^{n}$, $x_{0}=0$, and moreover that $f_{1}(x_{1},\ldots,x_{n})\in T_{n}$
is a Weierstrass polynomial. 
\end{cor}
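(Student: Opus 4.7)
The plan is to perform three successive reductions: first pass from $r$ generators to a single analytic function, then localize and rescale so as to land in the Tate algebra $T_n$, and finally apply a Weierstrass-preparation argument to replace $f$ by a Weierstrass polynomial. Each reduction preserves the $F$-analytic log-canonical threshold up to analytic diffeomorphism and multiplication by a unit, so a positive lower bound for the reduced object transfers back to $\lct_F(J;x_0)$.

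For the first step, I will use the pointwise identity $\min_{i}|f_i(x)|_F^{-s}=\max_{i}|f_i(x)|_F^{-s}\le\sum_{i=1}^{r}|f_i(x)|_F^{-s}$, which yields $\lct_F(J;x_0)\ge\min_i\lct_F(f_i;x_0)$, where $i$ ranges over those indices for which $f_i$ is not identically zero in a neighborhood of $x_0$. Since $J$ is non-zero and analytic functions satisfy an identity principle on each connected component of $X$ (itself a consequence of Weierstrass preparation), at least one such index exists, and it suffices to treat a single $f:=f_i$.

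For the second step, I will fix an analytic chart $\psi:U\to F^n$ with $\psi(x_0)=0$ and transport $\mu$ to $\psi_{*}\mu$; diffeomorphism invariance of the threshold reduces the problem to an analytic function defined on an open neighborhood of $0\in F^n$. The Taylor expansion of this function converges on some polydisk $\pi^{N}\mathcal{O}_F^n$, so the further rescaling $x\mapsto\pi^{N}x$ places $f$ inside $T_n$ with $X=\mathcal{O}_F^n$ and $x_0=0$. This step is mostly bookkeeping.

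For the third step, I will apply Proposition \ref{prop: unipotent automorphism of F^n} to produce an analytic diffeomorphism $\phi$ of $\mathcal{O}_F^n$ of the form \eqref{eq:simple automorphism} so that $f\circ\phi$ is distinguished in $x_n$, and then invoke Theorem \ref{thm:Weierstrass preparation} to write $(f\circ\phi)\cdot h=W$ for some unit $h\in T_n$ and Weierstrass polynomial $W$. The delicate point of this last step is the observation that a unit of $T_n$ has its Gauss norm concentrated at the constant coefficient, so $|h|_F$ is bounded above and below by positive constants on $\mathcal{O}_F^n$; hence $|f\circ\phi|_F^{-s}$ and $|W|_F^{-s}$ are pointwise comparable, giving $\lct_F(f\circ\phi;0)=\lct_F(W;0)$. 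I do not expect any substantial obstacle in executing this plan; the only verification that requires care is this comparison of norms, which follows from the standard structure theorem for units in the Tate algebra.
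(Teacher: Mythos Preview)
Your proposal is correct and follows the same three-step reduction as the paper's proof: pass to a single generator via $\lct_F(J;x_0)\ge\lct_F(f_i;x_0)$, localize in a chart and rescale into $T_n$, then apply Proposition~\ref{prop: unipotent automorphism of F^n} and Theorem~\ref{thm:Weierstrass preparation} to obtain a Weierstrass polynomial (with the unit $h$ harmless for the reason you state). The only slip is notational: your displayed identity $\min_i|f_i|^{-s}=\max_i|f_i|^{-s}$ should read $\min_i(|f_i|^{-s})=(\max_i|f_i|)^{-s}$, though in fact only the trivial bound $\min_i|f_i|^{-s}\le|f_j|^{-s}$ is needed here.
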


\begin{proof}
Since the germ $J_{x_{0}}$ at $x_{0}$ is non-zero, without loss
of generality, we may assume that the germ of $f_{1}$ at $x_{0}$
is non-zero. By Definition \ref{def:analytic lct}, we have $\lct_{F}(J;x_{0})\geq\lct_{F}(f_{1};x_{0})$.
Thus, we may assume $r=1$. We choose a diffeomorphism $\psi:U\rightarrow\mathcal{O}_{F}^{n}$,
with $x_{0}\in U\subseteq X$, and $\psi(x_{0})=0$. Writing $f:=f_{1}\circ\psi^{-1}$,
it is enough to prove that $\lct_{F}(f;0)>0$.

Without loss of generality, we may assume that the valuation in $F$
takes values in $\Z$, and let $\varpi_{F}$ be a uniformizer of $F$,
so that $\left|\varpi_{F}\right|_{F}=q_{F}^{-1}$. Since $f$ is analytic,
taking $k\in\N$ large enough, $f|_{\varpi_{F}^{k}\mathcal{O}_{F}^{n}}$
is given by a converging power series. So $\widetilde{f}:=f(\varpi_{F}^{k}x_{1},\ldots,\varpi_{F}^{k}x_{n})$
converges in $\mathcal{O}_{F}^{n}$, and thus $\widetilde{f}\in T_{n}$
(see e.g.~\cite[Section 5.1.4, Proposition 1]{BGR84}). Since $\lct_{F}(f;0)=\lct_{F}(\widetilde{f};0)$
we may also assume $f=\widetilde{f}$. Since the map $\phi$ from
(\ref{eq:simple automorphism}) is a diffeomorphism of $\mathcal{O}_{F}^{n}$,
we may assume that $f$ is distinguished in $x_{n}$. By Theorem \ref{thm:Weierstrass preparation},
we may replace $f$ with $f\cdot h$, where $h\in T_{n}$ is invertible,
obtaining a Weierstrass polynomial. 
\end{proof}
We now turn to obtain small ball estimates for monic polynomials and
Weierstrass polynomials. We first need the following elementary lemma. 
\begin{lem}
\label{lem:elementary lemma}Let $F$ be a non-Archimedean local field,
and let $F'/F$ be a finite field extension, with the norm on $F'$
chosen such that it extends the norm on $F$. Let $B(x',r)\subseteq F'$
be a ball in $F'$. Then either $B(x',r)\cap F=\varnothing$, or for
any $x\in B(x',r)\cap F$, 
\[
B(x',r)\cap F=\{y\in F:|x-y|_{F}\leq r\}.
\]
\end{lem}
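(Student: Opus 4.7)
The plan is to exploit the two standard features of non-Archimedean geometry at play here: (i) in an ultrametric space every point of a closed ball is a center, and (ii) the hypothesis that the norm on $F'$ extends the one on $F$ means the two notions of distance agree on elements of $F$.

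More concretely, I will start from the assumption $B(x',r)\cap F\neq\varnothing$ and pick any $x\in B(x',r)\cap F$. First I would show that $B(x',r)=B(x,r)$ as subsets of $F'$, where $B(x,r):=\{z\in F':|z-x|_{F'}\leq r\}$. This is the classical ``every point is a center'' statement: given $z\in B(x',r)$, the ultrametric inequality yields
\[
|z-x|_{F'}\leq\max\{|z-x'|_{F'},|x'-x|_{F'}\}\leq r,
\]
so $z\in B(x,r)$, and the reverse inclusion is symmetric since $|x-x'|_{F'}\leq r$.

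Next I would intersect both sides with $F$. Since the norm on $F'$ was chosen to extend that of $F$, for any $y\in F$ we have $|y-x|_{F'}=|y-x|_{F}$. Therefore
\[
B(x',r)\cap F=B(x,r)\cap F=\{y\in F:|y-x|_{F'}\leq r\}=\{y\in F:|y-x|_{F}\leq r\},
\]
which is exactly the conclusion. There is no real obstacle here: the only subtlety worth noting explicitly is the use of the fact that the extended absolute value on $F'$ is unique and coincides with the original one on $F$, which is where the hypothesis ``the norm on $F'$ is chosen such that it extends the norm on $F$'' is needed.
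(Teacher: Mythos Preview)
Your proof is correct and follows essentially the same approach as the paper: use the ultrametric fact that every point of a ball is a center to rewrite $B(x',r)=B(x,r)$, then intersect with $F$ and invoke that the norm on $F'$ restricts to the norm on $F$. You simply spell out the ultrametric inequality step in more detail than the paper does.
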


\begin{proof}
Let $B(x',r)\subseteq F'$ be a ball as above. Recall that for any
$x\in B(x',r)$ we have $B(x',r)=B(x,r)$. If $B(x',r)\cap F\neq\varnothing$,
then for any $x\in B(x',r)\cap F$ we have $B(x',r)\cap F=B(x,r)\cap F$.
The desired equality now follows as the norm on $F'$ extends the
norm on $F$. 
\end{proof}
\begin{lem}
\label{lem:Remez for monic}Let $f(x)=x^{d}+a_{d-1}x^{d-1}+\ldots+a_{1}x+a_{0}$
be a monic polynomial, with $a_{i}\in F$ for $0\leq i\leq d-1$.
Then for every $\delta>0$, we have: 
\[
\mu_{F}\left(\left\{ x\in F:\left|f(x)\right|_{F}\leq\delta\right\} \right)\le d\delta^{\frac{1}{d}}.
\]
\end{lem}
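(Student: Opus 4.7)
The plan is to reduce the problem on $F$ to counting preimages of small balls around the roots in an algebraic closure, and then pull this information back to $F$ using Lemma \ref{lem:elementary lemma}.

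First, I would pass to a finite extension $F'/F$ over which $f$ splits, and equip $F'$ with the unique extension of $|\cdot|_F$ (which exists and is multiplicative because $F$ is a complete non-Archimedean field). Writing $f(x) = \prod_{i=1}^{d}(x - \alpha_i)$ with $\alpha_i \in F'$, the identity $|f(x)|_F = \prod_{i=1}^{d}|x - \alpha_i|_{F'}$ for $x \in F$ shows that if $|f(x)|_F \leq \delta$, then necessarily $|x - \alpha_i|_{F'} \leq \delta^{1/d}$ for at least one index $i$. Thus
\[
\{x \in F : |f(x)|_F \leq \delta\} \subseteq \bigcup_{i=1}^{d} \bigl(B_{F'}(\alpha_i,\delta^{1/d}) \cap F\bigr),
\]
and it remains to bound the measure of each piece on the right.

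Next I would invoke Lemma \ref{lem:elementary lemma}: for each $i$, the intersection $B_{F'}(\alpha_i,\delta^{1/d}) \cap F$ is either empty or equals a closed ball in $F$ of radius $\delta^{1/d}$ (centered at any of its $F$-rational points). Since the Haar measure of a closed ball of radius $r$ in $F$ is the largest power $q_F^{-k}$ with $q_F^{-k} \leq r$, it is in any case bounded above by $r$. Therefore each of the $d$ sets has $\mu_F$-measure at most $\delta^{1/d}$, and summing via the union bound yields the desired estimate $d\,\delta^{1/d}$.

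The argument is essentially routine once the extension of the norm is in hand; the only subtle point is that the sets $B_{F'}(\alpha_i,\delta^{1/d}) \cap F$ live inside $F'$, not $F$, so one cannot directly use the $F$-ball volume formula without first identifying them with $F$-balls. That identification is precisely the content of Lemma \ref{lem:elementary lemma}, which is why it was set up beforehand, so I do not expect any serious obstacle.
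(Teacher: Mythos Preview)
Your proposal is correct and follows exactly the same route as the paper: split $f$ over a finite extension $F'$, use multiplicativity to trap the sublevel set in a union of $d$ balls $B_{F'}(\alpha_i,\delta^{1/d})\cap F$, and apply Lemma~\ref{lem:elementary lemma} to identify each nonempty intersection with an $F$-ball of radius $\delta^{1/d}$ before summing. Your added remark that the Haar measure of such a ball is bounded above by its radius is the one step the paper leaves implicit.
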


\begin{proof}
Note there exists a finite extension $F'$ of $F$ (extending the
norm on $F$), such that $f$ breaks into a product of linear terms
\[
f(x)=(x-\alpha_{1})\cdot\ldots\cdot(x-\alpha_{d}),\text{ with }\alpha_{j}\in F'.
\]
Hence, for every $\delta>0$ we have, 
\begin{align*}
\left\{ x\in F:\left|f(x)\right|_{F}\leq\delta\right\}  & =F\cap\left\{ x'\in F':\left|(x'-\alpha_{1})\cdot\ldots\cdot(x'-\alpha_{d})\right|_{F'}\leq\delta\right\} \\
 & \subseteq F\cap\left\{ x'\in F':\exists1\leq j\leq d\text{ s.t.\,}\left|(x'-\alpha_{j})\right|_{F'}\leq\delta^{\frac{1}{d}}\right\} .
\end{align*}
Note that $\left\{ x'\in F':\left|(x'-\alpha_{j})\right|_{F'}\leq\delta^{\frac{1}{d}}\right\} $
is a ball $B'_{j}:=B(\alpha_{j},\delta^{\frac{1}{d}})$ in $F'$.
By Lemma \ref{lem:elementary lemma}, $B'_{j}\cap F$ is either empty,
or a ball $B_{j}$ of radius $\delta^{\frac{1}{d}}$ in $F$. We therefore
get
\[
\mu_{F}\left(\left\{ x\in F:\left|f(x)\right|_{F}\leq\delta\right\} \right)\leq\sum_{j=1}^{d}\mu_{F}(B(\alpha_{j},\delta^{\frac{1}{d}})\cap F)\leq d\delta^{\frac{1}{d}}.\qedhere
\]
\end{proof}
\begin{lem}
\label{lem:small ball estimates for Weierstrass}Let $f\in T_{n-1}[x_{n}]$
be a Weierstrass polynomial of degree $d$. Then $\forall\delta>0$,
\[
\mu_{F^{n}}\left(\left\{ x\in\mathcal{O}_{F}^{n}:\left|f(x)\right|_{F}\leq\delta\right\} \right)\le d\delta^{\frac{1}{d}}.
\]
\end{lem}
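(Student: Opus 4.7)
The proof should be a direct application of Fubini combined with Lemma \ref{lem:Remez for monic}. The plan is to slice $\mathcal{O}_F^n$ along the first $n-1$ coordinates and use that, for each fixed value of $(x_1,\ldots,x_{n-1})\in\mathcal{O}_F^{n-1}$, the restriction $f(x_1,\ldots,x_{n-1},\cdot)$ is an honest monic polynomial in $x_n$ of degree $d$ with coefficients in $F$. This is exactly the setting of Lemma \ref{lem:Remez for monic}.

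More concretely, I would first observe that since $f\in T_{n-1}[x_n]$ is a Weierstrass polynomial, we can write
\[
f(x_1,\ldots,x_n)=x_n^{d}+\sum_{i=0}^{d-1}a_i(x_1,\ldots,x_{n-1})x_n^i,
\]
with $a_i\in T_{n-1}$. For any fixed $y=(x_1,\ldots,x_{n-1})\in\mathcal{O}_F^{n-1}$, evaluation gives $a_i(y)\in F$ (a convergent sum in $F$, since $\|a_i\|_{\mathrm{Gauss}}\leq 1$), so $f(y,\cdot)$ is a monic polynomial of degree exactly $d$ in the single variable $x_n$ with coefficients in $F$.

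Next, I apply Lemma \ref{lem:Remez for monic} to obtain, for every $y\in\mathcal{O}_F^{n-1}$ and every $\delta>0$,
\[
\mu_F\bigl(\{x_n\in F:|f(y,x_n)|_F\leq\delta\}\bigr)\leq d\delta^{1/d},
\]
and a fortiori the same bound holds when $x_n$ is restricted to $\mathcal{O}_F$. Finally, by Fubini's theorem and the normalization $\mu_F(\mathcal{O}_F^{n-1})=1$,
\[
\mu_{F^n}\bigl(\{x\in\mathcal{O}_F^n:|f(x)|_F\leq\delta\}\bigr)=\int_{\mathcal{O}_F^{n-1}}\mu_F\bigl(\{x_n\in\mathcal{O}_F:|f(y,x_n)|_F\leq\delta\}\bigr)\,d\mu_{F^{n-1}}(y)\leq d\delta^{1/d}.
\]

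There is no real obstacle here; the only subtle point to verify is that the coefficients $a_i(y)$ genuinely lie in $F$ and yield a well-defined monic polynomial of degree $d$, which is immediate from the definition of the Tate algebra and the Weierstrass property.
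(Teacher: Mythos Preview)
Your proposal is correct and essentially identical to the paper's own proof: fix the first $n-1$ coordinates, observe that the restriction is a monic degree-$d$ polynomial in $x_n$, apply Lemma~\ref{lem:Remez for monic}, and integrate over $\mathcal{O}_F^{n-1}$ via Fubini.
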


\begin{proof}
For each $z=(z_{1},\ldots,z_{n-1})\in\mathcal{O}_{F}^{n-1}$, set
$g_{z}(x_{n}):=f(z_{1},\ldots,z_{n-1},x_{n})$. Since $g_{z}$ is
a monic polynomial in $x_{n}$ of degree $d$, we get by Lemma \ref{lem:Remez for monic},
\begin{align*}
 & \mu_{F^{n}}\left(\left\{ (z,x_{n})\in\mathcal{O}_{F}^{n}:\left|f(z,x_{n})\right|_{F}\leq\delta\right\} \right)\\
= & \int_{\mathcal{O}_{F}^{n-1}}\left(\int_{\mathcal{O}_{F}}1_{\left\{ x_{n}\in\mathcal{O}_{F}:\left|g_{z}(x_{n})\right|_{F}\leq\delta\right\} }dx_{n}\right)dz_{1}\ldots dz_{n-1}\leq\int_{\mathcal{O}_{F}^{n-1}}d\delta^{\frac{1}{d}}dz_{1}\dots dz_{n-1}\leq d\delta^{\frac{1}{d}}.\qedhere
\end{align*}
\end{proof}
\begin{rem}
Estimates as in Lemmas \ref{lem:Remez for monic} and \ref{lem:small ball estimates for Weierstrass}
are called \emph{small ball} (or \emph{sublevel}) estimates. The case
of $F=\R$ and $n=1$ goes back to P\'olya and Remez \cite{Rem36}
in the 30's. Their works imply that for a monic degree $d$ polynomial
$f\in\R[x]$:
\[
\mu_{\mathrm{Lebesgue}}\left\{ x\in\R:\left|f(x)\right|<\delta\right\} <4\delta^{\frac{1}{d}}.
\]
Cubarikov \cite[Lemma 2]{Cub76} obtained small ball estimates for
$F=\Qp$. For $n>1$, various small ball estimates for polynomial
maps were obtained (see e.g.~\cite[Theorem 7.1]{CCW99}, \cite[Theorem 8]{carbery2001distributional}
and the discussion in \cite[Sections 2.3-2.5]{GM22}). 
\end{rem}

We can now finish the proof of Theorem \ref{thm:lower bound on lct}: 
\begin{proof}[Proof of Theorem \ref{thm:lower bound on lct}]
By Corollary \ref{cor:reduction to Weierstrass polynomial}, we may
assume that $r=1$, $x_{0}=0$, and that $f(x)$ is a Weierstrass
polynomial of degree $d$. If $d=0$, then $f$ is a unit and there
is nothing to prove. Hence assume $d\geq1$. By Lemma \ref{lem:small ball estimates for Weierstrass},
for every $s<\frac{1}{d}$, 
\begin{align*}
\int_{\mathcal{O}_{F}^{n}}\left|f(x)\right|_{F}^{-s}\mu_{F^{n}} & =\sum_{k=0}^{\infty}\mu_{F^{n}}\left(\left\{ x\in\mathcal{O}_{F}^{n}:\val(f(x))=k\right\} \right)q_{F}^{ks}\\
 & \leq\sum_{k=0}^{\infty}\mu_{F^{n}}\left(\left\{ x\in\mathcal{O}_{F}^{n}:\val(f(x))\geq k\right\} \right)q_{F}^{ks}\\
 & \leq\sum_{k=0}^{\infty}dq_{F}^{-k/d}\cdot q_{F}^{ks}=d\sum_{k=0}^{\infty}q_{F}^{-k(\frac{1}{d}-s)}=\frac{d}{1-q_{F}^{-(\frac{1}{d}-s)}}<\infty,
\end{align*}
which implies $\lct_{F}(f;0)\geq\frac{1}{d}$. In particular, Theorem
\ref{thm:lower bound on lct} follows by taking any $0<\epsilon_{x_{0}}<1/d$.
\end{proof}

\subsection{Proof of Theorems \ref{thm:uniform lower bound} and \ref{thm:concrete lower bound}}

Theorem \ref{thm:uniform lower bound} is a consequence of Theorem
\ref{thm:concrete lower bound}, since each smooth $F$-variety $X$
is locally (in the Zariski topology) of the form as in Theorem \ref{thm:concrete lower bound}.

The proof of Theorem \ref{thm:concrete lower bound} is based on Lemmas
\ref{lem:bounds on multiplicity} and \ref{lem:lct of low multiplicity}
below. 
\begin{defn}[{\cite[Definition 6.7.7]{HS06}}]
\label{def:order of vanishing}Let $X$ be an $F$-variety, $x_{0}\in X(F)$,
and $\varphi$ a regular function on $X$. The \emph{order} (or \emph{multiplicity})
of $\varphi$ at $x_{0}$, along $X$ is 
\[
\mathrm{ord}_{X,x_{0}}(\varphi)=\sup\left\{ k\in\N:\varphi\in\mathfrak{m}_{x_{0}}^{k}\right\} ,
\]
where $\mathfrak{m}_{x_{0}}$ is the maximal ideal in the local ring
$\mathcal{O}_{X,x_{0}}$. 
\end{defn}

\begin{lem}
\label{lem:bounds on multiplicity}In the setting of Theorem \ref{thm:concrete lower bound},
let $x_{0}\in X(F)$, and let $\varphi\in F[x_{1},\ldots,x_{n+m}]$
be a polynomial of degree $d$ which doesn't vanish identically on
the irreducible component $X_{0}$ containing $x_{0}$. Then 
\[
\mathrm{ord}_{X,x_{0}}(\varphi)\leq d\cdot D^{m}.
\]
\end{lem}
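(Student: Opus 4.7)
The plan is to reduce the bound on the order of vanishing along $X$ to a bound on the order of vanishing along a smooth curve through $x_0$, and then apply a Bezout-type inequality. Since $X$ is smooth of dimension $n$ at $x_0$, the local ring $\mathcal{O}_{X,x_0}$ is regular of dimension $n$. Set $s := \mathrm{ord}_{X,x_0}(\varphi)$, and assume $\varphi$ does not vanish identically on the irreducible component of $X$ through $x_0$ (otherwise $s = \infty$ and the bound is vacuous). I first choose $n-1$ linear forms $\ell_1, \ldots, \ell_{n-1} \in F[x_1, \ldots, x_{n+m}]$ vanishing at $x_0$ such that: (i) their images in $\mathfrak{m}_{x_0}/\mathfrak{m}_{x_0}^2$ are linearly independent, and (ii) the initial form of $\varphi$ in $\mathrm{gr}_{\mathfrak{m}_{x_0}} \mathcal{O}_{X,x_0}$, a nonzero homogeneous polynomial of degree $s$, does not vanish on the line cut out by the images of the $\ell_i$'s in the tangent space $T_{x_0}X$. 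Both conditions are Zariski-open on the space of $(n-1)$-tuples of linear forms, and are therefore achievable since $F$ is infinite.

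Let $Y := V(g_1, \ldots, g_m, \ell_1, \ldots, \ell_{n-1}) \subseteq \mathbb{A}_F^{n+m}$ and let $C$ denote the local irreducible component of $Y$ through $x_0$. Condition (i) ensures $C$ is a smooth curve at $x_0$, so $\mathcal{O}_{C,x_0}$ is a DVR; condition (ii) gives $\mathrm{ord}_{C,x_0}(\varphi) = \mathrm{ord}_{X,x_0}(\varphi) = s$ upon passing to the completion $\widehat{\mathcal{O}}_{C,x_0} \cong F\llbracket t \rrbracket$. The Bezout inequality applied to the defining equations of $Y$ yields $\deg \bar{Y} \leq \prod_{i=1}^m \deg g_i \cdot \prod_{j=1}^{n-1} \deg \ell_j \leq D^m$, and hence $\deg \bar{C} \leq D^m$ in $\mathbb{P}_F^{n+m}$. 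Applying projective Bezout to the intersection of $\bar{C}$ with the hypersurface $V(\varphi)$ of degree at most $d$: since $\varphi \not\equiv 0$ on $C$, this intersection is zero-dimensional of total degree at most $d \cdot D^m$. The local contribution at $x_0$ is the length of $\mathcal{O}_{C,x_0}/(\varphi)$, which equals $\mathrm{ord}_{C,x_0}(\varphi) = s$ since $\mathcal{O}_{C,x_0}$ is a DVR. This gives $s \leq d \cdot D^m$, as desired.

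The main technical obstacle is establishing condition (ii) for a generic choice of the $\ell_i$'s: the initial form of $\varphi$ is a nonzero homogeneous polynomial of degree $s$ on $T_{x_0}X \cong F^n$, and one must show that the line cut out by a generic $(n-1)$-tuple of linear forms avoids its vanishing locus. This reduces to the standard fact that a nonzero homogeneous polynomial on $F^n$ does not vanish on a generic line through the origin, which holds since $F$ is infinite. The remaining steps---the projective Bezout inequality for the defining equations of $Y$ and the identification of local intersection multiplicity on a smooth curve with order of vanishing---are classical.
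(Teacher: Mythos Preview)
Your proof is correct and takes a genuinely different route from the paper's. The paper argues in two lines: since $\mathcal{O}_{X,x_0}$ is regular, $\mathrm{ord}_{X,x_0}(\varphi)$ equals the Hilbert--Samuel multiplicity $e_{\mathrm{HS}}(X\cap\{\varphi=0\},x_0)$ (citing \cite[Example 11.2.8]{HS06}), and then invokes a Bezout bound directly on $e_{\mathrm{HS}}$ to get $e_{\mathrm{HS}}\le \deg(\varphi)\prod_j\deg(g_j)\le d\cdot D^m$. Your argument instead slices $X$ by $n-1$ generic linear forms down to a smooth curve $C$ through $x_0$, verifies via the initial-form condition that $\mathrm{ord}_{C,x_0}(\varphi)=\mathrm{ord}_{X,x_0}(\varphi)$, bounds $\deg\bar{C}\le D^m$ by iterated hypersurface Bezout, and then reads off $s\le \deg\bar{C}\cdot\deg\varphi$ from the curve--hypersurface intersection. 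In effect you are unpacking the Hilbert--Samuel machinery by hand: the reduction to a generic one-dimensional slice is exactly how one computes $e_{\mathrm{HS}}$ in a regular local ring. What you gain is a self-contained argument that avoids citing the $\mathrm{ord}=e_{\mathrm{HS}}$ identity and the Bezout bound for Hilbert--Samuel multiplicities as black boxes; what you pay is the extra work of setting up conditions (i) and (ii) and checking that $\bar{C}$ is genuinely an irreducible component of the projective intersection (which it is, since the affine trace of any component of $V(\bar g_i,\bar\ell_j)$ containing $\bar C$ must equal $C$).
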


\begin{proof}
Since $X$ is a smooth $F$-variety, then $(\mathcal{O}_{X,x_{0}},\mathfrak{m}_{x_{0}})$
is a regular local ring. If $\varphi(x_{0})\neq0$, then $\mathrm{ord}_{X,x_{0}}(\varphi)=0\leq d\cdot D^{m}$.
Suppose that $\varphi(x_{0})=0$, and let $Z$ be the scheme $Z:=X_{0}\cap\left\{ \varphi=0\right\} $.
By \cite[Example 11.2.8]{HS06}, we have $\mathrm{ord}_{X,x_{0}}(\varphi)=e_{\mathrm{HS}}(Z,x_{0})$,
where $e_{\mathrm{HS}}$ denotes the \emph{Hilbert--Samuel multiplicity}
of $Z$ at $x_{0}\in Z(F)$ (see \cite[Definition 11.1.5]{HS06}).
By Bezout's theorem, 
\[
e_{\mathrm{HS}}(Z,x_{0})\leq\deg(\varphi)\cdot\prod_{j=1}^{m}\deg(g_{j})\leq d\cdot D^{m}.
\]
This concludes the proof.
\end{proof}
If $f:U\rightarrow F$ is an analytic function in an open neighborhood
$U$ of $y\in F^{n}$, we write
\[
\mathrm{ord}_{y}(f):=\max\left\{ k\in\mathbb{Z}_{\ge0}:f\in(\mathfrak{m}_{y}^{\mathrm{an}})^{k}\right\} ,
\]
where $\mathfrak{m}_{y}^{\mathrm{an}}$ is the maximal ideal of analytic
germs vanishing at $y$ in the ring $\mathcal{O}_{F^{n},y}^{\mathrm{an}}$
of analytic germs at $y$. In particular, if $y=0$, and $f(x_{1},\ldots,x_{n})=\sum_{I\in\mathbb{Z}_{\ge0}^{n}}a_{I}x^{I}$
is the convergent power series expansion of $f$ near $0$, then 
\[
\mathrm{ord}_{0}(f)=\min\left\{ \left|I\right|:a_{I}\neq0\right\} ,
\]
and moreover, $\mathrm{ord}_{0}(f)=0$ if and only if $f(0)\neq0$.

In addition, for a power series $f(x_{1},\ldots,x_{n})=\sum_{I\in\mathbb{Z}_{\ge0}^{n}}a_{I}x^{I}$,
and $M\in\N$, we denote by $f_{M}:=\sum_{I:\left|I\right|=M}a_{I}x^{I}$
its $M$-th homogeneous part.
\begin{lem}
\label{lem:lct of low multiplicity}Let $U\subseteq F^{n}$ be an
open compact subset, and $x_{0}\in U$, and let $f:U\rightarrow F$
be an analytic function, with $K:=\mathrm{ord}_{x_{0}}(f)$. Then
$\lct_{F}(f;x_{0})\geq\frac{1}{K}$ if $K>0$, and $\lct_{F}(f;x_{0})=\infty$
if $K=0$. 
\end{lem}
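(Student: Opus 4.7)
The plan is to adapt the strategy of Theorem~\ref{thm:lower bound on lct}, tightening the Weierstrass reduction so the resulting Weierstrass polynomial has degree equal to $K$, the order of $f$ at $x_0$. As in Corollary~\ref{cor:reduction to Weierstrass polynomial}, I would first reduce to $U = \mathcal{O}_F^n$, $x_0 = 0$, and $f \in T_n$; since diffeomorphisms fixing $0$ preserve the order of vanishing, we still have $\mathrm{ord}_0 f = K$.

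The main step is to produce a diffeomorphism after which $f$ is distinguished in $x_n$ of order exactly $K$, so that Theorem~\ref{thm:Weierstrass preparation} yields a Weierstrass polynomial of that degree. Since the degree-$K$ initial form $f_K$ of $f$ is nonzero and its zero locus is a proper Zariski closed subset of $F^n$ (hence has empty interior), one can find a primitive vector $v \in \mathcal{O}_F^n \setminus \varpi_F \mathcal{O}_F^n$ with $f_K(v) \neq 0$. Completing $v$ to an $\mathcal{O}_F$-basis gives $A \in \GL_n(\mathcal{O}_F)$ with $A(e_n) = v$; after replacing $f$ by $f \circ A$, the coefficient of $x_n^K$ in $f$ is nonzero while every monomial of total degree $<K$ vanishes.

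I would then apply the anisotropic rescaling $x_i \mapsto \varpi_F^{k'} x_i$ for $i < n$ and $x_n \mapsto \varpi_F^{k} x_n$ with integers $k' \gg k \gg 0$, playing the role of the unipotent map in Proposition~\ref{prop: unipotent automorphism of F^n}. Writing $\tilde a_s(x') \in T_{n-1}$ for the coefficient of $x_n^s$ in the rescaled function $\tilde f \in T_n$, a direct Gauss-norm computation shows: taking $k$ large forces $\|\tilde a_K\|_{\mathrm{Gauss}} > \|\tilde a_s\|_{\mathrm{Gauss}}$ for every $s > K$ (such coefficients pick up a suppressing factor $|\varpi_F|_F^{k(s-K)}$), while taking $k' - k$ large forces $\|\tilde a_K\|_{\mathrm{Gauss}} \geq \|\tilde a_s\|_{\mathrm{Gauss}}$ for $s < K$ and makes $\tilde a_K$ a unit in $T_{n-1}$ (any nonzero coefficient with $s < K$ must involve $x'$ and so picks up $|\varpi_F|_F^{k'(K-s)}$). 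Hence $\tilde f$ is distinguished in $x_n$ of order exactly $K$.

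Applying Theorem~\ref{thm:Weierstrass preparation} then yields $\tilde f \cdot h = g$ for a Weierstrass polynomial $g \in T_{n-1}[x_n]$ of degree $K$ and a unit $h \in T_n$. Since both $h$ and $h^{-1}$ lie in $T_n$ they are bounded on $\mathcal{O}_F^n$, so $\lct_F(\tilde f; 0) = \lct_F(g;0)$, and the final summation in the proof of Theorem~\ref{thm:lower bound on lct} — applied to $g$ of degree $K$ via Lemma~\ref{lem:small ball estimates for Weierstrass} — gives $\lct_F(g;0) \geq 1/K$. Tracing back through all the diffeomorphisms produces the desired bound. The main technical obstacle is the two-parameter Gauss-norm bookkeeping of the previous paragraph: verifying that the pair $(k, k')$ can be chosen so that the distinguished order is exactly $K$ rather than some larger value, which is what would happen if one invoked Proposition~\ref{prop: unipotent automorphism of F^n} as a black box without first using $A$ to expose a pure $x_n^K$ monomial.
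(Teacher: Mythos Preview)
Your proposal is correct and follows essentially the same route as the paper: a linear change of variables to make the $x_n^K$ coefficient nonzero, followed by an anisotropic rescaling to render $f$ distinguished in $x_n$ of order exactly $K$, and then Weierstrass preparation plus Lemma~\ref{lem:small ball estimates for Weierstrass}. The only cosmetic differences are that the paper uses a shear $(x_i\mapsto x_i+c_i x_n)$ rather than a general $A\in\GL_n(\mathcal{O}_F)$, and organizes the rescalings slightly differently (first an isotropic rescaling to normalize the degree-$K$ part, then the shear, then an anisotropic rescaling in $x_1,\dots,x_{n-1}$ only); your two-parameter rescaling achieves the same effect in one step.
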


\begin{proof}
We may assume that $x_{0}=0$. If $K=0$, then $f(0)\neq0$. Hence,
$\left|f\right|_{F}^{-s}$ is locally bounded near $0$ for every
$s\ge0$, and therefore $\lct_{F}(f;0)=\infty$. We may thus assume
that $K\ge1$.

By shrinking $U$ around $0$, we may assume that $f$ is given by
a converging power series 
\[
f(x_{1},\ldots,x_{n})=\sum_{I\in\Z_{\geq0}^{n}}a_{I}x^{I}=\sum_{k=K}^{\infty}f_{k},\text{ where }f_{k}:=\sum_{I\in\Z_{\geq0}^{n}:\left|I\right|=k}a_{I}x^{I}.
\]
Hence, there exist $a\in\N$ and $b\in\Z$ such that the function
$g(x_{1},\ldots,x_{n}):=\varpi_{F}^{-b}f(\varpi_{F}^{a}x_{1},\ldots,\varpi_{F}^{a}x_{n})$
satisfies: 
\begin{enumerate}
\item $g=\sum_{I\in\Z_{\geq0}^{n}}b_{I}x^{I}\in T_{n}$.
\item $\underset{I:\left|I\right|=K}{\sup}\left|b_{I}\right|_{F}=1$. 
\item $\underset{I:\left|I\right|>K}{\sup}\left|b_{I}\right|_{F}<1$, so
that $\left\Vert g\right\Vert _{\mathrm{Gauss}}=1$. 
\end{enumerate}
Since $\lct_{F}(f;0)=\lct_{F}(g;0)$, we may replace $f$ with $g$.
We now make a linear change of variables $T$ so that the coefficient
of $x_{n}^{K}$ in $(g\circ T)_{K}$ is non-zero. If $n=1$, this
is automatic. Assume $n>1$. Consider the polynomial 
\[
P(c_{1},\ldots,c_{n-1}):=g_{K}(c_{1},\ldots,c_{n-1},1).
\]
Since $g_{K}$ is a non-zero homogeneous polynomial, $P$ is a non-zero
polynomial. Since $\left(\mathcal{O}_{F}^{\times}\right)^{n-1}\subseteq F^{n-1}$
is an open subset, and $\left\{ P=0\right\} $ is either empty, or
a codimension $1$ subset in $F^{n-1}$, we may fix $c_{1},\ldots,c_{n-1}\in\mathcal{O}_{F}^{\times}$
such that $P(c_{1},\ldots,c_{n-1})\neq0$. In particular, $\left|P(c_{1},\ldots,c_{n-1})\right|_{F}=q_{F}^{-k_{0}}$
for some $k_{0}\in\Z_{\geq0}$. We define $T:\mathcal{O}_{F}^{n}\rightarrow\mathcal{O}_{F}^{n}$,
by: 
\[
T(x_{1},\ldots,x_{n}):=(x_{1}+c_{1}x_{n},\ldots,x_{n-1}+c_{n-1}x_{n},x_{n}),
\]
and get that 
\[
g\circ T(x_{1},\ldots,x_{n})=\sum_{I:\left|I\right|=K}b_{I}\cdot(x^{I}\circ T)+\sum_{I:\left|I\right|>K}b_{I}\cdot(x^{I}\circ T).
\]
Therefore, the coefficient of $x_{n}^{K}$ in $(g\circ T)_{K}=g_{K}\circ T(x_{1},\ldots,x_{n})$
is $g_{K}(c_{1},\ldots,c_{n-1},1)\neq0$, of absolute value $q_{F}^{-k_{0}}$. 

We now rescale $g\circ T$ in two steps. First, define
\[
g_{1}(x_{1},\ldots,x_{n}):=\varpi_{F}^{-k_{0}-K(k_{0}+1)}(g\circ T)(\varpi_{F}^{k_{0}+1}x_{1},\ldots,\varpi_{F}^{k_{0}+1}x_{n}).
\]
Write $g_{1}=\sum_{\left|I\right|\geq K}c_{I}x^{I}$. Then the coefficient
of $x_{n}^{K}$ in $g_{1}$ has absolute value $1$. Moreover, $\max_{|I|=K}\left|c_{I}\right|_{F}\leq q_{F}^{k_{0}}$,
while $\left|c_{I}\right|_{F}\leq q_{F}^{-1}$ for every $I$ with
$\left|I\right|>K$. Next, we define
\[
\widetilde{g}(x_{1},\ldots,x_{n}):=g_{1}(\varpi_{F}^{k_{0}+1}x_{1},\ldots,\varpi_{F}^{k_{0}+1}x_{n-1},x_{n}).
\]
Write $\widetilde{g}=\sum_{|I|\geq K}d_{I}x^{I}$. The coefficient
of $x_{n}^{K}$ still has absolute value $1$. If $I\neq(0,\ldots,0,K)$
and $\left|I\right|=K$, then $I_{1}+\cdots+I_{n-1}\geq1$, hence
$\left|d_{I}\right|_{F}\leq q_{F}^{k_{0}}q_{F}^{-(k_{0}+1)}=q_{F}^{-1}<1$.
If $\left|I\right|>K$ then already $\left|c_{I}\right|_{F}\leq q_{F}^{-1}$,
and the second rescaling cannot increase the absolute value. Hence
every coefficient of $\widetilde{g}$, except the coefficient of $x_{n}^{K}$,
has absolute value $<1$. Writing
\[
\widetilde{g}=\sum_{s=0}^{\infty}a_{s}(x_{1},\ldots,x_{n-1})x_{n}^{s},
\]
we see that $a_{K}$ has constant term of absolute value $1$, while
all its other coefficients have absolute value $<1$. Therefore $a_{K}$
is a unit in $T_{n-1}$. Also $\left\Vert \widetilde{g}\right\Vert _{\mathrm{Gauss}}=\left\Vert a_{K}\right\Vert _{\mathrm{Gauss}}=1$,
and $\left\Vert a_{s}\right\Vert _{\mathrm{Gauss}}<1$ for every $s>K$.
Thus $\widetilde{g}$ is distinguished in $x_{n}$ of order $K$.

Since the above operations consist only of shrinking the neighborhood
of 0, composing with analytic diffeomorphisms, and multiplying the
function by a non-zero scalar, they do not change the local integrability
exponent at $0$, so $\lct_{F}(g;0)=\lct_{F}(\widetilde{g};0)$.

By Theorem \ref{thm:Weierstrass preparation}, as in the proof of
Corollary \ref{cor:reduction to Weierstrass polynomial}, there exists
a Weierstrass polynomial $\widehat{g}=x_{n}^{K}+\sum_{i=0}^{K-1}r_{i}(x_{1},\ldots,x_{n-1})x_{n}^{i}$
of degree $K$, such that $\lct_{F}(g;0)=\lct_{F}(\widehat{g};0)$.
By the proof of Theorem \ref{thm:lower bound on lct}, we get that
$\lct_{F}(\widehat{g};0)\geq\frac{1}{K}$, as required. This concludes
the proof of the lemma. 
\end{proof}
\begin{proof}[Proof of Theorem \ref{thm:concrete lower bound}]
Let $x_{0}\in X(F)$. Without loss of generality, we may assume that
$\varphi_{1}$ doesn't vanish identically on the irreducible component
containing $x_{0}$. It is enough to prove that $\lct_{F}(\varphi_{1};x_{0})\geq\frac{1}{d\cdot D^{m}}$.

By Lemma \ref{lem:bounds on multiplicity}, $\mathrm{ord}_{X,x_{0}}(\varphi_{1})\leq d\cdot D^{m}$.
Changing coordinates to $\mathcal{O}_{F}^{n}$ via a diffeomorphism
$\psi:U\rightarrow\mathcal{O}_{F}^{n}$ with $\psi(x_{0})=0$ near
$x_{0}\in U\subseteq X$, and writing $f:=\varphi_{1}|_{U}\circ\psi^{-1}$,
it is enough to prove that $\lct_{F}(f;0)\geq\frac{1}{d\cdot D^{m}}$.
Since the multiplicity of $f$ at $0$ is bounded by $d\cdot D^{m}$,
Lemma \ref{lem:lct of low multiplicity} implies that $\lct_{F}(f;0)\geq\frac{1}{d\cdot D^{m}}$,
as required. 
\end{proof}
\begin{acknowledgement*}
We thank Rami Aizenbud, Raf Cluckers and Floris Vermeulen for useful
discussions. We further thank the anonymous referees for their useful
comments. I.G.~was supported by ISF grant 3422/24.\bibliographystyle{alpha}
\bibliography{bibfile}
\end{acknowledgement*}

\end{document}